\newtheorem{theorem}{Theorem}[section]
\newtheorem{proposition}[theorem]{Proposition}
\newtheorem{remark}[theorem]{Remark}
\newtheorem{example}[theorem]{Example}
\numberwithin{equation}{section}
\begin{document}
\title{On some variational algebraic problems}
\author{Giovanni Molica Bisci and Du\v{s}an Repov\v{s}
}
\date{}

\maketitle

\begin{abstract}
In this paper by exploiting critical point theory, the existence of two distinct nontrivial solutions for a nonlinear algebraic system with a parameter is established. Our goal is achieved by requiring an appropriate behavior of the nonlinear term $f$ at zero and at infinity. Some applications to difference equations are also presented.

~\\
\textbf{Keywords.} Discrete nonlinear boundary value problems, multiple solutions, difference equations.

~\\
\textbf{2010 Mathematics Subject Classification.} $39\mathrm{A}10, 34\mathrm{B}15.$
\end{abstract}

\section{Introduction}
In this paper we deal with the following problem:\\
\begin{center}
\hfill $Au=\lambda f(u)$, \hfill $(S_{A,\lambda}^{f})$
\end{center}

where $u= (u_{1},\ \ldots\ ,\ u_{n})^{t}\in \mathbb{R}^{n}$ is a column vector in $\mathbb{R}^{n}, A=(a_{ij})_{n\times n}$ is a given positive definite matrix, $f(u) :=(f_{1}(u_{1}),\ \ldots\ ,\ f_{n}(u_{n}))^{t}$ with $f_{k}: \mathbb{R}\rightarrow \mathbb{R}$ a continuous function for every $k\in \mathbb{Z}[1,\ n] :=\{1,\ .\ .\ .\ ,\ n\}$, and $\lambda$ is a positive parameter. 

Discrete problems involving functions with two or more discrete variables are very relevant and have been deeply investigated. Such great interest is undoubtedly due to the advance of modern digital computing devices.

Indeed, since these relations can be simulated in a relatively easy manner by means of such devices and since such simulations often reveal important information about the behavior of complex systems, a large number of recent investigations related to image processing, population models, neural networks, social behaviors, digital control systems are described in terms of such functional relations.

Moreover, a large number of problems can be formulated as special cases of the nonlinear algebraic system $(S_{A,\lambda}^{f})$ . For a survey on these topics we cite the recent paper [25]. A similar approach has also been used in others works (see for instance, the papers [21--23] and [24, 26, 27]).

Here, motivated by the interest on the subject, by using variational methods in finite dimensional setting, we prove the existence of two nontrivial solutions for suitable values of the parameter $\lambda.$

More precisely, in Theorem~\ref{Theorem:3.1} we prove the existence of two nontrivial solutions, for every $\lambda$ sufficiently large, by only requiring sublinear conditions at infinity and an appropriate behaviour of the nonlinear terms at zero.

In Theorem~\ref{Theorem:3.4} we determine an open interval of positive parameters such that problem $(S_{A,\lambda}^{f})$ admits at least two nontrivial solutions which are uniformly bounded in norm with respect to the parameter $\lambda.$

Our main tool, in this case, is a useful abstract result obtained in [3, Theorem 2.1] which ensures the existence of an open interval $\Lambda\subset(0, +\infty)$ such that for each $\lambda\in\Lambda$ the function $J_{\lambda}$ associated to problem $(S_{A,\lambda}^{f})$ admits two critical points which are uniformly bounded in norm with respect to $\lambda$ (see also [5, 6] for related topics).

A direct application of our result to fourth-order difference equations yields the following:

\begin{proposition}\label{Proposition:1.1}
Assume that
$$
\sup_{t\in \mathbb{R}}\sum_{k=1}^{n}\int_{0}^{t}f_{k}(s)ds>0,
$$
in addition to
$$
\lim_{|s|\rightarrow\infty}\frac{f_{k}(s)}{s}=\lim_{s\rightarrow 0}\frac{f_{k}(s)}{s}=0,
$$
for every $k \in \mathbb{Z}[1, n]$. Then there exist a nonempty open interval $\Lambda\subset(0, +\infty)$ and a number 
$\gamma>0$  such that for every $\lambda\in\Lambda$, problem
~\\~

\hfill
$\left\{\begin{array}{l}
\Delta^4 u_{k-2}= \lambda f_{k}(u_{k}), \forall k\in \mathbb{Z}[1, n], \qquad \qquad \qquad \qquad\\
u_{-2}=u_{-1}=u_{0}=0, \hfill (D_{\lambda}^{f})\\
u_{n+1}=u_{n+2}=u_{n+3}=0,
\end{array}\right.$
~\\~\\
has at least two distinct, nontrivial solutions $u_{\lambda}^{1}, u_{\lambda}^{2}\in \mathbb{R}^{n}$,  and
$$
\Vert u_{\lambda}^{i}\Vert_{2}<\gamma,\ i\in\{1,2\}.
$$
\end{proposition}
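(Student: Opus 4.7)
The plan is to recast $(D_\lambda^f)$ as an instance of the abstract algebraic system $(S_{A,\lambda}^f)$ for an explicit positive definite matrix $A$, and then apply Theorem~\ref{Theorem:3.4} directly.

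First, expanding the fourth difference operator one gets
$$\Delta^4 u_{k-2} = u_{k+2} - 4 u_{k+1} + 6 u_k - 4 u_{k-1} + u_{k-2}, \qquad k \in \mathbb{Z}[1,n],$$
so that, after substituting the prescribed ghost values $u_{-2}=u_{-1}=u_0=u_{n+1}=u_{n+2}=u_{n+3}=0$, the problem $(D_\lambda^f)$ becomes $Au = \lambda f(u)$, where $A$ is the symmetric pentadiagonal $n \times n$ matrix with stencil $(1,-4,6,-4,1)$ (truncated near the corners, since the ghost columns are dropped). A standard discrete summation by parts, whose boundary contributions vanish thanks to the three ghost zeros on each end, yields an identity of the form
$$(Au,u) = \sum_{k} \bigl(\Delta^2 u_{k-2}\bigr)^2,$$
showing that $A$ is symmetric positive semidefinite; the Dirichlet-type boundary data then upgrade this to strict positive definiteness, since $\Delta^2 u_{k-2} \equiv 0$ together with the boundary conditions forces $u \equiv 0$.

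Once $A$ is in place, the hypotheses of Theorem~\ref{Theorem:3.4} are met: the two limits $\lim_{s\to 0} f_k(s)/s = 0$ and $\lim_{|s|\to\infty} f_k(s)/s = 0$ provide, respectively, the vanishing-at-zero and sublinear-at-infinity conditions, while $\sup_{t\in\mathbb{R}} \sum_{k=1}^n \int_0^t f_k(s)\,ds > 0$ is exactly the sign condition guaranteeing that the associated energy functional attains a positive value somewhere. Invoking Theorem~\ref{Theorem:3.4} then delivers the open interval $\Lambda \subset (0,+\infty)$, the uniform bound $\gamma>0$, and the two distinct nontrivial solutions $u_\lambda^1, u_\lambda^2 \in \mathbb{R}^n$ with $\|u_\lambda^i\|_2 < \gamma$.

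The one point where some care is required is the bookkeeping at the boundary: with three ghost values on each side it is easy to mis-handle the corner entries of $A$ or to overlook a surface term in the summation-by-parts identity, so one must verify that $A$ is genuinely positive definite and not merely semidefinite. After this, the argument is essentially a verbatim translation of the hypotheses into those of Theorem~\ref{Theorem:3.4}.
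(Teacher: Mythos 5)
Your proposal is correct and follows essentially the same route as the paper: Section 4.2 of the paper rewrites $(D_{\lambda}^{f})$ as $(S_{A,\lambda}^{f})$ with exactly the pentadiagonal matrix $A$ you describe and then states that Proposition~\ref{Proposition:1.1} is a direct consequence of Theorem~\ref{Theorem:3.4}. The only difference is that you supply the summation-by-parts justification of positive definiteness, which the paper simply asserts; that is a welcome but minor addition.
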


Further, requiring a suitable growth of the primitive of $f$, we are able to establish suitable intervals of values of the parameter $\lambda$ for which the problem $(S_{A,\lambda}^{f})$ admits at least three weak solutions. More precisely, the main result ensures the existence of two real intervals of parameters $\Lambda_{1}, \Lambda_{2}$ such that, for each $\lambda\in\Lambda_{1}\cup\Lambda_{2},$
the problem $(S_{A,\lambda}^{f})$ admits at least three weak solutions whose norms are uniformly bounded with respect to every $\lambda \in\Lambda_{2}$ (see Theorem~\ref{Theorem:3.5} and Example~\ref{Example:3.7}). Our method is mostly based on a useful critical point theorem given in [4, Theorem 3.1].

In conclusion, we also emphasize that if the functions $f_{k}$ are nonnegative, for every $k\in \mathbb{Z}[1, n]$, our results guarantee two positive solutions (see Remark~\ref{Remark:3.8} for more details). For a complete and exhaustive overview of variational methods we refer the reader to the monographs [1, 16, 19].

The plan of the paper is as follows. In Section~\ref{Section:2} we introduce some basic notations. In Section~\ref{Section:3} we obtain our existence results (see Theorems~\ref{Theorem:3.1} and~\ref{Theorem:3.4}). Finally, in Section~\ref{Section:4}, some concrete examples of applications of the obtained results are presented.

\section{Preliminaries}\label{Section:2}

As the ambient space $X$, we consider the $n$-dimensional Banach space $\mathbb{R}^{n}$ endowed by the norm
$$
\Vert u\Vert_{2}:=\left(\sum_{k=1}^{n}u_{k}^{2}\right)^{1/2}.
$$
More generally, we set
$$
\Vert u\Vert_{r}:=\left(\sum_{k=1}^{n}|u_{k}|^{r}\right)^{1/r}\ (r\geq 1)
$$
for every $u\in X.$

Let $\mathfrak{X}_{n}$ denote the class of all symmetric and positive definite matrices of order $n$. Further, we denote by $\lambda_{1}$, . . . , $\lambda_{n}$ the eigenvalues of $A$ ordered as follows: $0<\lambda_{1}\leq\cdots\leq\lambda_{n}.$

It is well known that if {\it A} $\in\mathfrak{X}_{n}$, for every $u\in X$, then one has
\begin{equation}\label{Equation:2.1}
\lambda_{1}\Vert u\Vert_{2}^{2}\leq u^{t}Au\leq\lambda_{n}\Vert u\Vert_{2}^{2},
\end{equation}

and
\begin{equation}\label{Equation:2.2}
\displaystyle \Vert u\Vert_{\infty}\leq\frac{1}{\sqrt{\lambda_{1}}}(u^{t} Au )^{1/2},
\end{equation}
where $\Vert u\Vert_{\infty} :=\displaystyle \max_{k\in[1,n]}|u_{k}|.$

From now on we will assume that {\it A} $\in\mathfrak{X}_{n}$. Set

\begin{equation}\label{Equation:2.3}
\Phi(u):=\frac{u^{t}Au}{2}, \quad \Psi(u):=\sum_{k=1}^{n}F_{k}(u_{k}) \ \text{ and }\ J_{\lambda}(u):=\Phi(u)- \lambda \Psi(u),
\end{equation}

for every $u\in X$, where

$$F_{k}(t) :=\displaystyle \int_{0}^{t}f_{k}(s)ds, \ \text{ for every } (k, t) \in \mathbb{Z}[1, n]\times \mathbb{R}.$$

Standard arguments show that $J_{\lambda}\in C^{1}(X, \mathbb{R})$ as well as that the critical points of $J_{\lambda}$ are exactly the solutions of problem $(S_{A,\lambda}^{f})$.

Indeed, a column vector $\overline{u}= (\overline{u}_{1}, \ldots , \overline{u}_{n})^{t}\in X$ is a critical point of the functional $J_{\lambda}$ if the gradient of $J_{\lambda}$ at $\overline{u}$ is zero, i.e.,
$$
\frac{\partial J_{\lambda}(u)}{\partial u_{1}}|_{u=\overline{u}}=0,\ \frac{\partial J_{\lambda}(u)}{\partial u_{2}}|_{u=\overline{u}}=0, \ldots , \frac{\partial J_{\lambda}(u)}{\partial u_{n}}|_{u=\overline{u}}=0.
$$
Moreover, for every $k\in\mathbb{Z}[1, n]$, one has that
$$
\frac{\partial u^{t}Au}{\partial u_{k}}=2(Au)_{k},
$$
where $(Au)_{k} :=\displaystyle \sum_{j=1}^{n}a_{kj}u_{j}$. Thus

$$ \frac{\partial J_{\lambda}(u)}{\partial u_{k}}=(Au)_{k}-\lambda f_{k}(u_{k}),\ \text{ for all } k\in \mathbb{Z}[1, n],$$

which yields our assertion.

\section{Main results}\label{Section:3}

Our first result is a multiplicity theorem obtained as a consequence of Tonelli's approach together with a careful analysis of the meaningful Mountain Pass geometry of the functional $J_{\lambda}$. More precisely, we consider the case when the continuous functions $f_{k}: \mathbb{R}\rightarrow \mathbb{R}$ fulfil the following hypotheses:
~\\~\\~
\noindent($\text{h}_1$) 
\textit{For every} $k\in \mathbb{Z}[1, n],$
$$
\lim_{|s|\rightarrow\infty}\frac{f_{k}(s)}{s}=0.
$$
 
\noindent($\text{h}_2$)
\textit{There exists} $\nu_{0}>1$ \textit{such that}
$$
\lim_{|s|\rightarrow 0}\frac{f_{k}(s)}{|s|^{\nu_{0}}}=0,
$$
\textit{for every} $k\in \mathbb{Z}[1, n].$
~\\~\\~
Note that a typical example when ($\text{h}_1$) holds is the following:
~\\~\\~
\noindent($\text{h}_1^{\star}$)
{\it There exist} $q\in(0,1)$ {\it and} $c>0$ {\it such that} $|f_{k}(s)|\leq c|s|^{q}$, {\it for every} $s\in \mathbb{R}.$
~\\~\\~
With the above notations, we can prove the following multiplicity result.

\begin{theorem}\label{Theorem:3.1}
Assume that conditions ($\text{h}_1$) and ($\text{h}_2$) hold in addition to
$$
\sup_{t\in \mathbb{R}}\sum_{k=1}^{n}F_{k}(t)>0.
$$
Then:
\begin{enumerate}
	\item[i.] There exists a positive parameter $\lambda^{\star}$ given by
$$\lambda^{\star}:=\left(\frac{\mathrm{T}\mathrm{r}(A)+2\sum_{i<j}a_{ij}}{2}\right)\left(\max_{t\neq 0}\frac{\sum_{k=1}^{n}F_{k}(t)}{t^{2}}\right)^{-1},$$
such that, for every $\lambda > \lambda^{\star}$, problem 
$(S_{A,\lambda}^{f})$ has at least two distinct, nontrivial solutions $u_{\lambda}^{1}, u_{\lambda}^{2}\in\mathbb{R}^{n}$, where $u_{\lambda}^{1}$ is the global minimum of the energy functional $J_{\lambda}$ associated to $(S_{A,\lambda}^{f})$.

	\item[ii.] If $(\mathrm{h}_{1}^{\star})$  holds, then
$$\Vert u_{\lambda}^{1}\Vert_{2}=o(\lambda^{1/(1-r)}), \ \text{ for every } r\in(q, 1),$$
but
$$\Vert u_{\lambda}^{1}\Vert_{2}\neq O(\lambda^{1/(1-\mu)}), \ \text{ for every } \mu>1,$$
as $\lambda\rightarrow \infty.$
\end{enumerate}
\end{theorem}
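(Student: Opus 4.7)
My strategy is to combine the direct method of the calculus of variations (Tonelli) for producing the global minimizer $u_\lambda^1$ with the Mountain Pass Theorem for producing a second critical point $u_\lambda^2$; this is natural in the finite-dimensional setting where coercivity and the Palais--Smale condition essentially come for free. I would first use $(\mathrm{h}_1)$ to show that $J_\lambda$ is coercive: for any $\epsilon>0$, $(\mathrm{h}_1)$ yields a constant $M_\epsilon>0$ with $|F_k(s)|\leq\epsilon s^2+M_\epsilon$ for all $s\in\mathbb{R}$, so combining with the estimate $\Phi(u)\geq (\lambda_1/2)\Vert u\Vert_2^2$ from \eqref{Equation:2.1} and choosing $\epsilon<\lambda_1/(2\lambda n)$, one obtains $J_\lambda(u)\to+\infty$ as $\Vert u\Vert_2\to\infty$. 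Continuity of $J_\lambda$ on $\mathbb{R}^n$ then furnishes a global minimum $u_\lambda^1$.

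Next, to force $u_\lambda^1\neq 0$, I would test $J_\lambda$ on the vector $\bar u_t:=t(1,\ldots,1)^t$. Since $A$ is symmetric, a direct computation gives $(1,\ldots,1)^tA(1,\ldots,1)=\sum_{i,j}a_{ij}=\mathrm{Tr}(A)+2\sum_{i<j}a_{ij}$, hence
$$
J_\lambda(\bar u_t)=t^2\left[\frac{\mathrm{Tr}(A)+2\sum_{i<j}a_{ij}}{2}-\lambda\,\frac{\sum_{k=1}^n F_k(t)}{t^2}\right].
$$
Hypotheses $(\mathrm{h}_1)$ and $(\mathrm{h}_2)$ imply that $t\mapsto \sum_k F_k(t)/t^2$ is continuous on $\mathbb{R}\setminus\{0\}$ and tends to $0$ at $0$ and at $\pm\infty$, while $\sup_t \sum_k F_k(t)>0$ makes this quotient strictly positive somewhere; hence its maximum is attained at some $t^\star\neq 0$. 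Choosing $t=t^\star$ shows $J_\lambda(\bar u_{t^\star})<0$ precisely when $\lambda>\lambda^\star$, so $J_\lambda(u_\lambda^1)\leq J_\lambda(\bar u_{t^\star})<0$ and $u_\lambda^1$ is nontrivial. For the Mountain Pass geometry I would exploit $(\mathrm{h}_2)$: since $\nu_0>1$, for any $\epsilon>0$ one has $|F_k(s)|\leq \frac{\epsilon}{\nu_0+1}|s|^{\nu_0+1}$ for $|s|$ small, and using $\sum|u_k|^{\nu_0+1}\leq\Vert u\Vert_\infty^{\nu_0-1}\Vert u\Vert_2^2\leq\Vert u\Vert_2^{\nu_0+1}$ one derives $J_\lambda(u)\geq\rho^2[\lambda_1/2-c\lambda\rho^{\nu_0-1}]$ on a sphere $\Vert u\Vert_2=\rho$, which exceeds some $\alpha>0$ for $\rho$ small enough. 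Since $J_\lambda$ is coercive and $C^1$ on $\mathbb{R}^n$, the Palais--Smale condition is automatic, and the Mountain Pass Theorem yields $u_\lambda^2$ with $J_\lambda(u_\lambda^2)\geq\alpha>0>J_\lambda(u_\lambda^1)$, distinct from $u_\lambda^1$ and nontrivial.

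For part (ii), $(\mathrm{h}_1^\star)$ gives $|F_k(s)|\leq (c/(q+1))|s|^{q+1}$, and H\"older's inequality yields $\sum|u_k|^{q+1}\leq n^{(1-q)/2}\Vert u\Vert_2^{q+1}$. Testing the Euler--Lagrange equation $Au_\lambda^1=\lambda f(u_\lambda^1)$ against $u_\lambda^1$ and using \eqref{Equation:2.1} gives $\lambda_1\Vert u_\lambda^1\Vert_2^2\leq cn^{(1-q)/2}\lambda\Vert u_\lambda^1\Vert_2^{q+1}$, whence $\Vert u_\lambda^1\Vert_2\leq C\lambda^{1/(1-q)}$; since $1/(1-q)<1/(1-r)$ for $r\in(q,1)$, this is the required $o(\lambda^{1/(1-r)})$ estimate. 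For the non-$O$ part, I would combine $J_\lambda(u_\lambda^1)\leq J_\lambda(\bar u_{t^\star})=C_1-\lambda C_2$ with $C_2>0$ and the trivial bound $J_\lambda(u_\lambda^1)\geq -\lambda\Psi(u_\lambda^1)\geq -\lambda C\Vert u_\lambda^1\Vert_2^{q+1}$ to get $\Vert u_\lambda^1\Vert_2\geq c_0>0$ for all large $\lambda$; since $\lambda^{1/(1-\mu)}\to 0$ when $\mu>1$, the norm cannot be $O(\lambda^{1/(1-\mu)})$. The main delicate point I expect is the Mountain Pass verification, specifically choosing $\rho$ so that $J_\lambda|_{\partial B_\rho}\geq\alpha$ while $u_\lambda^1$ lies outside $B_\rho$; the rest is a careful but routine exploitation of the coercivity estimate and the local-at-zero control provided by $(\mathrm{h}_2)$.
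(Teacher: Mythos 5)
Your proposal is correct, and for part (i) it follows the paper's own route essentially verbatim: coercivity of $J_\lambda$ from $(\text{h}_1)$ plus the direct method gives the global minimizer, testing on the constant vector $t_\star(1,\ldots,1)^t$ shows the minimum value is negative exactly when $\lambda>\lambda^\star$, and $(\text{h}_2)$ supplies the Mountain Pass geometry near the origin (the paper packages the local superquadratic bound as a global estimate $|F_k(t)|\leq C|t|^{\nu+1}$ combining $(\text{h}_1)$ and $(\text{h}_2)$, whereas you invoke it only for $|s|$ small, which suffices on a small sphere since $\Vert u\Vert_\infty\leq\Vert u\Vert_2=\rho$). Where you genuinely diverge is part (ii). For the $O(\lambda^{1/(1-q)})$ bound the paper argues from the inequality $J_\lambda(u_\lambda^1)<0$ together with the lower bound $J_\lambda(u)\geq\frac{\lambda_1}{2}\Vert u\Vert_2^2-\frac{\lambda c}{q+1}c_{q+1}^{q+1}\Vert u\Vert_2^{q+1}$, while you test the Euler--Lagrange system $Au=\lambda f(u)$ against $u$ itself; your version has the small advantage of applying to any solution, not just the minimizer, and avoids the sign information on $J_\lambda(u_\lambda^1)$. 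For the non-$O(\lambda^{1/(1-\mu)})$ statement the paper runs a contradiction through $J_\lambda(u_\lambda^1)\leq(\lambda^\star-\lambda)\sum_k F_k(t_\star)\to-\infty$ against a lower bound for the energy that stays bounded when $\Vert u_\lambda^1\Vert_2=O(\lambda^{1/(1-\mu)})$, whereas you extract directly from $J_\lambda(u_\lambda^1)\leq C_1-\lambda C_2$ and $J_\lambda(u_\lambda^1)\geq-\lambda C\Vert u_\lambda^1\Vert_2^{q+1}$ the uniform lower bound $\Vert u_\lambda^1\Vert_2\geq c_0>0$ for large $\lambda$, which is incompatible with $\lambda^{1/(1-\mu)}\to 0$. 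Your version is slightly cleaner and sidesteps the paper's somewhat awkward use of a $|t|^{\mu+1}$ bound on $F_k$ near zero; both are valid.
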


\begin{proof}
Due to conditions $(\mathrm{h}_{1})$ and $(\mathrm{h}_{1})$ the term $F_{k}(t)/t^{2}$ tends to zero as $|t|\rightarrow \infty$ and $t\rightarrow 0$, respectively. Moreover, since
$$
\sup_{t\in \mathbb{R}}\sum_{k=1}^{n}F_{k}(t)>0,
$$
there exists $t_{0}\in \mathbb{R}$ such that
$$
\sum_{k=1}^{n}F_{k}(t_{0})>0.
$$
Thus, the value $\lambda^{\star}$ is well-defined. Hence, there exists a number $t_{\star}\in\mathbb{R}\backslash \{0\}$ such that
$$
\frac{\sum_{k=1}^{n}F_{k}(t_{\star})}{t_{\star}^{2}}=\max_{t\neq 0}\frac{\sum_{k=1}^{n}F_{k}(t)}{t^{2}}.
$$
So
$$
\lambda^{\star}:=\left(\frac{\mathrm{T}\mathrm{r}(A)+2\sum_{i<j}a_{ij}}{2}\right)\frac{t_{\star}^{2}}{\sum_{k=1}^{n}F_{k}(t_{\star})}.
$$

At this point fix $\lambda>\lambda^{\star}$ and let us first consider the vector $u^{\star}\in X$ of components $u_{k}^{\star}=t_{\star}$, for every $k\in\mathbb{Z}[1, n].$

One has
\begin{eqnarray*}
J_{\lambda}(u^{\star})= \Phi(u^{\star})- \lambda \Psi(u^{\star})=\left(\frac{\mathrm{T}\mathrm{r}(A)+2\sum_{i<j}a_{ij}}{2}\right)t_{\star}^{2}- \lambda\sum_{k=1}^{n}F_{k}(t_{\star})\\
=(\lambda^{\star}-\lambda)\sum_{k=1}^{n}F_{k}(t_{\star})<0.\qquad \qquad \qquad \qquad \qquad \qquad \qquad \quad \
\end{eqnarray*}

Thus $\displaystyle \inf_{u\in X} J_{\lambda}(u)\leq J_{\lambda}(u^{\star})<0$. Due to ($\text{h}_{1}$), for an arbitrarily $ \epsilon<\frac{\lambda_{1}}{\lambda}$ there exists $c(\epsilon)>0$ such that
$$
F_{k}(t)\leq|F_{k}(t)|\leq\frac{\epsilon}{2}t^{2}+c(\epsilon)|t|,
$$
for every $t\in \mathbb{R}$ and $k\in \mathbb{Z}[1, n]$. Consequently, from the left-hand side of~(\ref{Equation:2.1}), we have
$$
J_{\lambda}(u)\geq(\frac{\lambda_{1}-\lambda\epsilon}{2})\Vert u\Vert_{2}^{2}-\lambda c_{1}c(\epsilon)\Vert u\Vert_{2},
$$
where $c_{1}$ is a positive constant such that $\Vert u\Vert_{1}\leq c_{1}\Vert u\Vert_{2}$, for every $u\in X.$

It follows from this that $J_{\lambda}$ is bounded from below and coercive. Hence, since our ambient space is finite dimensional, the functional $J_{\lambda}$ satisfies the classical compactness (PS)-condition.

Since $J_{\lambda}$ verifies the (PS)-condition and it is bounded from below, by [16, Theorem 1.7], one can fix $u_{\lambda}^{1}\in X$ such that $J(u_{\lambda}^{1})= \inf_{u\in X}J_{\lambda}(u)$. Therefore, $u_{\lambda}^{1}\in X$ is the first solution of $(S_{A,\lambda}^{f})$ and $u_{\lambda}^{1}\neq 0$, since $J_{\lambda}(0_{X})=0$.

Now, we prove that for every $\lambda>\lambda^{\star}$ the functional $J_{\lambda}$ has the standard Mountain Pass geometry. Indeed, by ($\text{h}_{1}$) and ($\text{h}_{2}$), one can fix two constants $\nu >1$ and $C>0$ such that
$$
|F_{k}(t)|\leq C|t|^{\nu +1},
$$
for every $t\in\mathbb{R}$ and $k\in\mathbb{Z}[1, n]$. Moreover, bearing in mind condition~(\ref{Equation:2.1}), one has
$$
J_{\lambda}(u)= \Phi(u)- \lambda \Psi(u)
$$
$$
\qquad \qquad \qquad \geq\frac{\lambda_{1}}{2}\Vert u\Vert_{2}^{2}-\lambda C\Vert u\Vert_{\nu+1}^{\nu+1}
$$
\begin{equation}\label{Equation:3.1}
\qquad \qquad \qquad \qquad  \geq\frac{\lambda_{1}}{2}\Vert u\Vert_{2}^{2}-\lambda c_{\nu+1}^{\nu+1}C\Vert u\Vert_{2}^{\nu+1},
\end{equation}

where $c_{\nu+1}$ is a positive constant such that

$$\Vert u\Vert_{\nu+1}\leq c_{\nu+1}\Vert u\Vert_{2}, \ \text{ for all } u\in X.$$

Let us take $\rho_{\lambda}>0$ to be so small that
$$
\rho_{\lambda}<\min\left\lbrace\left(\frac{\lambda_{1}}{2\lambda c_{v+1}^{v+1}C}\right)^{\frac{1}{v-1}},\ \sqrt{n}|t_{\star}|\right\}.
$$
By~(\ref{Equation:3.1}), for every $u\in X$ complying with $\Vert u\Vert_{2}=\rho_{\lambda}$, we have
\begin{eqnarray*}
J_{\lambda}(u)\geq(\frac{\lambda_{1}}{2}-\lambda c_{v+1}^{v+1}C\Vert u\Vert_{2}^{v-1})\Vert u\Vert_{2}^{2}\\
=(\frac{\lambda_{1}}{2}-\lambda c_{v+1}^{v+1}C\rho_{\lambda}^{v-1})\rho_{\lambda}^{2}\qquad\\
=:\eta(\rho_{\lambda})>0. \qquad \qquad \qquad \; \; \, 
\end{eqnarray*}
By construction, one has $\Vert u\Vert_{2}=\sqrt{n}|t_{\star}|>\rho_{\lambda}$, and $J(u^{\star})<0=J_{\lambda}(0_{X})$ .

Hence, we can apply the Mountain Pass Theorem (see [16, Theorem 1.13]). Thus, there exists $u_{\lambda}^{2}\in X$ such that $J'(u_{\lambda}^{2})=0$ and $J_{\lambda}(u_{\lambda}^{2})\geq\eta(\rho_{\lambda})>0$. Further, $u_{\lambda}^{2}\neq 0_{X}$ and the vectors $u_{\lambda}^{1}$ and $u_{\lambda}^{2}$ are distinct. The proof of point (i) is complete.

Now, we assume that $(\mathrm{h}_{1}^{\star})$ holds. Since $J_{\lambda}(u_{\lambda}^{1})<0$, it follows that
$$
\frac{\lambda_{1}}{2}\Vert u_{\lambda}^{1}\Vert_{2}^{2}-\frac{\lambda c}{(q+1)}c_{q+1}^{q+1}\Vert u\Vert_{2}^{q+1}\leq J_{\lambda}(u_{\lambda}^{1})<0.
$$
In particular, $\Vert u_{\lambda}^{1}\Vert_{2}=O(\lambda^{1/(1-q)})$ as $\lambda\rightarrow \infty$. Therefore, for any $r\in(q, 1)$, one has $\Vert u_{\lambda}^{1}\Vert_{2}=o(\lambda^{1/(1-r)})$ as $\lambda\rightarrow \infty.$

Let us assume that $\Vert u_{\lambda}^{1}\Vert_{2}=O(\lambda^{1/(1-\mu)})$ for some $\mu>1$ as $\lambda\rightarrow \infty$. Then $\Vert u_{\lambda}^{1}\Vert_{2}\rightarrow 0$ as $\lambda\rightarrow \infty$. On the other hand,
$$
J(u_{\lambda}^{1})\leq(\lambda^{\star}-\lambda)\sum_{k=1}^{n}F_{k}(t_{\star})\ ,
$$
hence $J_{\lambda}(u_{\lambda}^{1})\rightarrow-\infty$. Now, since
$$
\frac{\lambda_{1}}{2}\Vert u_{\lambda}^{1}\Vert_{2}^{2}-\lambda c_{\mu+1}^{\mu+1}C\Vert u_{\lambda}^{1}\Vert_{2}^{\mu+1}\leq J_{\lambda}(u_{\lambda}^{1})\ ,
$$
one has
$$
\left(\frac{\lambda_{1}}{2}-\lambda c_{\mu+1}^{\mu+1}C\Vert u_{\lambda}^{1}\Vert_{2}^{\mu-1}\right)\Vert u_{\lambda}^{1}\Vert_{2}^{2}\rightarrow-\infty,
$$
as $\lambda\rightarrow \infty$. This fact contradicts the initial assumption.

The proof is thus complete.
\end{proof} 

\begin{remark}\label{Remark:3.2} 
\normalfont
We observe that Theorem~\ref{Theorem:3.1} can be checked by a careful analysis of a three critical points theorem contained in [7, Theorem 3.6].
\end{remark}

Now, instead of ($\text{h}_2$) we will assume a weaker condition, namely:
~\\~\\~
\noindent($\text{h}_2'$)
$\lim_{s\rightarrow 0}\frac{f_{k}(s)}{s}=0, \ \text{ for every } k\in\mathbb{Z}[1, n].$
~\\~\\~
The next theorem below shows that assumption $(\text{h}_{2}')$ is still strong enough to prove a similar multiplicity result as Theorem~\ref{Theorem:3.1}. In this setting we obtain that the solutions are uniformly bounded in norm with respect to the parameter $\lambda$ but, unfortunately, we lose the precise location of the eigenvalues. The main tool for our goal is a theoretical result given in [3, Theorem 2.1] (see, for completeness, [16, Theorem 1.13]).

We prove the following preliminary fact.

\begin{proposition}\label{Proposition:3.3}
Assume that condition ($\text{h}_1$) holds in addition to ($\text{h}_{2}'$). Then
$$
\lim_{\varrho\rightarrow 0^+}\frac{\sup_{u\in\Phi^{-1}(]-\infty, \varrho[)} \Psi(u)}{\varrho}=0.
$$
\end{proposition}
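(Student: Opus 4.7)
The plan is to pass from a bound on $\Phi(u)$ to a uniform bound on the coordinates $|u_k|$, and then to exploit the superquadratic decay of $F_k$ at $0$ that follows from $(\mathrm{h}_2')$.

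First I would unwind the sublevel set: $\Phi^{-1}(]-\infty,\varrho[)=\{u\in X:u^tAu<2\varrho\}$, and by the left inequality in \eqref{Equation:2.1} (or by \eqref{Equation:2.2}) every $u$ in this set satisfies $\|u\|_2^2<2\varrho/\lambda_1$, hence in particular $|u_k|\le\|u\|_\infty<\sqrt{2\varrho/\lambda_1}$ for all $k\in\mathbb{Z}[1,n]$. So as $\varrho\to 0^+$ the relevant coordinates are uniformly small, which is exactly the regime controlled by $(\mathrm{h}_2')$.

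Next I would translate $(\mathrm{h}_2')$ into a pointwise estimate on the primitives $F_k$. For an arbitrary $\epsilon>0$, condition $(\mathrm{h}_2')$ gives a $\delta_k(\epsilon)>0$ such that $|f_k(s)|\le\epsilon|s|$ for $|s|\le\delta_k(\epsilon)$; taking $\delta(\epsilon):=\min_{1\le k\le n}\delta_k(\epsilon)>0$ (finiteness of the index set is used here) and integrating yields $|F_k(s)|\le\tfrac{\epsilon}{2}s^2$ whenever $|s|\le\delta(\epsilon)$, uniformly in $k$. I would then choose $\varrho$ so small that $\sqrt{2\varrho/\lambda_1}\le\delta(\epsilon)$, so that the estimate above applies coordinate-wise for every $u\in\Phi^{-1}(]-\infty,\varrho[)$.

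Summing over $k$ I would then obtain
\[
\Psi(u)\le\sum_{k=1}^n|F_k(u_k)|\le\frac{\epsilon}{2}\sum_{k=1}^n u_k^2=\frac{\epsilon}{2}\|u\|_2^2\le\frac{\epsilon}{2}\cdot\frac{2\varrho}{\lambda_1}=\frac{\epsilon\,\varrho}{\lambda_1},
\]
for every such $u$, so that $\sup_{u\in\Phi^{-1}(]-\infty,\varrho[)}\Psi(u)\le\epsilon\varrho/\lambda_1$. Dividing by $\varrho$ and letting $\varrho\to 0^+$ gives $\limsup_{\varrho\to 0^+}\sup\Psi/\varrho\le\epsilon/\lambda_1$, and since $0_X$ lies in the sublevel set and $\Psi(0_X)=0$ the supremum is nonnegative. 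Letting $\epsilon\to 0^+$ concludes the proof.

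I do not expect any serious obstacle here: the whole argument is essentially a finite-dimensional, coordinate-wise linearization around $0$. The only subtle points are making sure the choice of $\delta(\epsilon)$ is uniform in $k$ (immediate, by finiteness of the index set) and noting that assumption $(\mathrm{h}_1)$ plays no role in this preliminary proposition — it is $(\mathrm{h}_2')$ alone that controls the behaviour of $\Psi$ near the origin.
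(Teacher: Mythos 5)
Your proof is correct, but it takes a genuinely different and in fact more economical route than the paper's. The paper combines $(\text{h}_1)$ and $(\text{h}_2')$ to produce a \emph{global} estimate $F_k(t)\leq \frac{\epsilon}{2}t^2+\frac{c(\epsilon)}{\nu+1}|t|^{\nu+1}$ valid for all $t\in\mathbb{R}$ (with $\nu>1$), substitutes this into $\Psi$ over the sublevel set, and then observes that the resulting extra term is of order $\varrho^{(\nu+1)/2}=o(\varrho)$, so it disappears in the limit. You instead localize first: since $\Phi^{-1}(]-\infty,\varrho[)$ is contained in the ball of radius $(2\varrho/\lambda_1)^{1/2}$, which shrinks to the origin, only the behaviour of the $f_k$ near $0$ is ever sampled, and the single local bound $|F_k(s)|\leq\frac{\epsilon}{2}s^2$ from $(\text{h}_2')$ (uniform in $k$ by finiteness of the index set) already gives $\sup\Psi\leq\epsilon\varrho/\lambda_1$ once $\varrho$ is small enough. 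Your observation that $(\text{h}_1)$ is not actually needed for this proposition is correct and is the main dividend of your approach; what the paper's version buys in exchange is a global two-term estimate on the primitives of the type it reuses in the Mountain Pass geometry arguments elsewhere in Section 3, whereas your argument is purpose-built for the limit at $\varrho\to 0^+$.
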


\begin{proof}
Due to $(\mathrm{h}_{2}')$ , for an arbitrary small $\epsilon>0$ there exists $\delta_{\epsilon}>0$ such that
$$
|f_{k}(s)|<\epsilon|s|,
$$
for every $|s|<\delta_{\epsilon}$ and $k\in \mathbb{Z}[1, n]$. On the other hand, on account of ($\text{h}_{1}$), one can fix $\nu>1$ and
$$
|f_{k}(s)|<\epsilon|s|^{\nu},
$$
for every $|s|\geq\delta_{\epsilon}$ and $k\in \mathbb{Z}[1,n]$. Combining these two facts, we obtain
$$
F_{k}(t)\leq\epsilon\frac{t^{2}}{2}+\frac{c(\epsilon)}{(\nu+1)}|t|^{\nu+1},
$$
for every $t \in\mathbb{R}$ and $k\in\mathbb{Z}[1, n]$.

Now, fix $\varrho>0$. For every $u \in \Phi^{-1} (]-\infty, \varrho [)$, due to the above estimates, we have

$$\displaystyle \Psi(u)\leq\frac{\epsilon}{2}\Vert u\Vert_{2}^{2}+\frac{c(\epsilon)}{(\nu+1)}c_{\nu+1}^{\nu+1}\Vert u\Vert_{2}^{\nu+1}<\frac{\epsilon\varrho}{\lambda_{1}}+c(\epsilon)\frac{c_{\nu+1}^{\nu+1}}{(\nu+1)}(\frac{2\varrho}{\lambda_{1}})^{\frac{\nu+1}{2}},$$

taking into account that
$$
\{u\in X:u^{t}Au\ <2\varrho\}\subset\left\{u\in X:\Vert u\Vert_{2}<\left(\frac{2\varrho}{\lambda_{1}}\right)^{1/2}\right\}.
$$
Thus, there exists $\varrho(\epsilon)>0$ such that, for every $0<\varrho<\varrho(\epsilon)$, we have
$$
0\leq\frac{\sup_{u\in\Phi^{-1}(]-\infty, \varrho[)} \Psi(u)}{\varrho}<\frac{ \epsilon}{\lambda_{1}}+c(\epsilon)\frac{\varrho^{\frac{\nu-1}{2}}}{(\nu+1)}\left(\frac{2}{\lambda_{1}}\right)^{\frac{\nu+1}{2}}< \epsilon,
$$
which completes the proof.
\end{proof}

Our multiplicity result reads as follows.

\begin{theorem}\label{Theorem:3.4}
Assume that conditions ($\text{h}_1$) and ($\text{h}_{2}'$) hold. Then there exist a nonempty open interval $\Lambda\subset(0, +\infty)$ and a number $\gamma>0$ such that for every $\lambda \in \Lambda$, problem $(S_{A,\lambda}^{f})$ has at least two distinct, nontrivial solutions $u_{\lambda}^{1}, u_{\lambda}^{2}\in X$, and $\Vert u_{\lambda}^{i}\Vert_{2}<\gamma, i\in\{1, 2 \}$.
\end{theorem}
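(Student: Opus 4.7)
The plan is to deduce Theorem~\ref{Theorem:3.4} from the abstract multiplicity criterion [3, Theorem 2.1] mentioned in the Introduction, applied to the natural decomposition $J_{\lambda} = \Phi - \lambda \Psi$ from (\ref{Equation:2.3}). That criterion delivers simultaneously a nonempty open parameter interval $\Lambda$ and a uniform a priori bound $\gamma$ on two critical points of $J_{\lambda}$, provided one checks: (i) standard regularity with $\Phi$ coercive, sequentially weakly lower semicontinuous and $\Psi$ sequentially weakly continuous, together with $\Phi(0_{X}) = \Psi(0_{X}) = 0$; (ii) a Palais--Smale type compactness for $J_{\lambda}$; and (iii) the zero-at-zero limit
$$
\lim_{\varrho \to 0^{+}} \frac{\sup_{u \in \Phi^{-1}(]-\infty,\varrho[)} \Psi(u)}{\varrho} = 0,
$$
which is exactly the content of the preparatory Proposition~\ref{Proposition:3.3}.

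The first step is to verify (i). Since $A \in \mathfrak{X}_{n}$, the quadratic form $\Phi$ is of class $C^{1}$, strictly convex, continuous and coercive in view of (\ref{Equation:2.1}); the continuity of each $f_{k}$ gives $\Psi \in C^{1}(X, \mathbb{R})$; and both functionals vanish at $0_{X}$. Because $X = \mathbb{R}^{n}$ is finite dimensional, weak and strong topologies coincide, so the semicontinuity and weak-to-strong continuity requirements are automatic. The second step is to establish (ii). Exactly as in the proof of Theorem~\ref{Theorem:3.1}, condition $(\text{h}_{1})$ produces, for each $\epsilon < \lambda_{1}/\lambda$, a constant $c(\epsilon) > 0$ with $F_{k}(t) \leq (\epsilon/2)\,t^{2} + c(\epsilon)|t|$ for every $k$ and $t$, so that (\ref{Equation:2.1}) yields
$$
J_{\lambda}(u) \geq \frac{\lambda_{1} - \lambda \epsilon}{2}\,\|u\|_{2}^{2} - \lambda c_{1} c(\epsilon)\,\|u\|_{2}.
$$
Hence $J_{\lambda}$ is bounded below and coercive, and because it is $C^{1}$ on a finite-dimensional space it automatically satisfies the (PS) condition.

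With (i)--(iii) in hand, [3, Theorem 2.1] directly produces a nonempty open interval $\Lambda \subset (0, +\infty)$ together with a number $\gamma > 0$ such that, for every $\lambda \in \Lambda$, the functional $J_{\lambda}$ admits two distinct critical points $u_{\lambda}^{1}, u_{\lambda}^{2}$ lying in the open ball $\{u \in X : \|u\|_{2} < \gamma\}$. Neither of these can be $0_{X}$: the abstract construction locates the first as a local minimum at a strictly negative energy level, while the second is produced by a mountain-pass type argument at a strictly positive level, and $J_{\lambda}(0_{X}) = 0$ sits strictly between them. Translating critical points of $J_{\lambda}$ back into solutions of $(S_{A,\lambda}^{f})$ as in Section~\ref{Section:2} closes the argument. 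The delicate point of the whole plan is precisely the alignment between Proposition~\ref{Proposition:3.3} and the zero-at-zero hypothesis of [3, Theorem 2.1]; this is also why $(\text{h}_{2}')$ has been imposed in the sharp form $\lim_{s \to 0} f_{k}(s)/s = 0$ rather than in some technically weaker variant.
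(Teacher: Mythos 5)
Your overall strategy coincides with the paper's: both proofs reduce Theorem~\ref{Theorem:3.4} to the abstract result [3, Theorem 2.1] applied to $J_{\lambda}=\Phi-\lambda\Psi$, with Proposition~\ref{Proposition:3.3} supplying the behaviour of $\sup_{\Phi<\varrho}\Psi/\varrho$ as $\varrho\to 0^{+}$ and coercivity/(PS) handled exactly as you do. However, there is a genuine gap: the hypotheses you list --- regularity, (PS)/coercivity, and the zero-at-zero limit of Proposition~\ref{Proposition:3.3} --- are not sufficient for [3, Theorem 2.1]. That theorem also requires a \emph{crossing} (or ratio) condition: one must exhibit a test vector $u^{1}$ with $\Phi(u^{1})>\varrho$ and
$$
\frac{\sup_{u\in\Phi^{-1}(]-\infty,\varrho[)}\Psi(u)}{\varrho}\;<\;\frac{\Psi(u^{1})}{\Phi(u^{1})},
$$
and the upper endpoint $\overline{a}$ of the parameter interval is built precisely from the gap in this strict inequality. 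This is the heart of the paper's argument: it uses the (tacit, but essential) assumption $\sup_{t\in\mathbb{R}}\sum_{k=1}^{n}F_{k}(t)>0$ to produce $t_{\star}\neq 0$ maximizing $\sum_{k}F_{k}(t)/t^{2}$, takes $u^{1}=(t_{\star},\ldots,t_{\star})^{t}$ so that $\Psi(u^{1})/\Phi(u^{1})=1/\lambda^{\star}>0$, and then chooses $\varrho$ via Proposition~\ref{Proposition:3.3} so that the displayed inequality holds with margin $\epsilon$. Your proposal never produces such a $u^{1}$ and never invokes the positivity of the primitives, so the abstract theorem simply cannot be applied as you claim.

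That this step cannot be skipped is shown by the degenerate example $f_{k}\equiv 0$ for all $k$: conditions $(\text{h}_{1})$ and $(\text{h}_{2}')$ hold, Proposition~\ref{Proposition:3.3} holds trivially, $J_{\lambda}=\Phi$ is coercive and satisfies (PS), yet $(S^{f}_{A,\lambda})$ reduces to $Au=0$, which has only the trivial solution for every $\lambda$. Hence no argument using only the ingredients you verify can yield two nontrivial solutions; the positivity of $\sup_{t}\sum_{k}F_{k}(t)$ and the associated test vector are indispensable. A secondary, smaller point: your justification that the two critical points are nontrivial (``one at negative level, one at positive mountain-pass level'') describes the geometry of Theorem~\ref{Theorem:3.1}, not of [3, Theorem 2.1]; the correct reason here is that the abstract theorem delivers \emph{three} distinct critical points of norm less than $\gamma$, of which at least two must differ from $0_{X}$.
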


\begin{proof}
Let $X :=\mathbb{R}^{n}$, and consider the functionals $\Phi$ and $\Psi$ defined in (\ref{Equation:2.3}). Note that $J_{\lambda} := \Phi - \lambda \Psi$. We already know that for every positive parameter $\lambda$ the functional $J_{\lambda}$ is coercive and consequently satisfies the Palais-Smale condition, because $X$ is finite dimensional.

Due to the fact that the functions $f_{k}$ are sublinear at infinity and superlinear at zero, the terms $F_{k}(t)/t^{2}\rightarrow 0$ as $|t|\rightarrow \infty$ and $t\rightarrow 0$, respectively.

\noindent
Since $\displaystyle \sup_{t\in \mathbb{R}}\sum_{k=1}^{n}F_{k}(t)>0$, there exists $t_{0}\in \mathbb{R}$ such that $\displaystyle \sum_{k=1}^{n}F_{k}(t_{0})>0,$ and we may fix a number $t_{\star}\in \mathbb{R}\backslash \{0\}$ such that
$$
\frac{\sum_{k=1}^{n}F_{k}(t_{\star})}{t_{\star}^{2}}=\max_{t\neq 0}\frac{\sum_{k=}^{n}{}_{1}F_{k}(t)}{t^{2}}.
$$
Therefore the number
$$
\lambda^{\star}:=\left(\frac{\mathrm{T}\mathrm{r}(A)+2\sum_{i<j}a_{ij}}{2}\right)\frac{t_{\star}^{2}}{\sum_{k=}^{n}{}_{1}F_{k}(t_{\star})}
$$
is well-defined.

Now, let us choose $u^{0}=0_{X}$ and $u^{1}\in X$ such that

$$u_{k}^{1}=t_{\star}, \ \text{ for every } k \in\mathbb{Z}[1, n].$$

Fixing $\epsilon \in (0,1)$ , due to Proposition~\ref{Proposition:3.3}, one can choose $\varrho>0$ such that

$$\frac{\sup_{u\in\Phi^{-1}(]-\infty},\varrho[) \Psi(u)}{\varrho}<\frac{\epsilon}{\lambda^{\star}} \text{ and } \varrho< \left(\frac{\mathrm{T}\mathrm{r}(A)+2\sum_{i<j}a_{ij}}{2}\right)t_{\star}^{2}.$$

Note that

$$\frac{\epsilon}{\lambda^{\star}}<\frac{1}{\lambda^{\star}}=\frac{ \Psi(u^{1})}{\Phi(u^{1})} \text{ and }\Phi(u^{1})=\left(\frac{\mathrm{T}\mathrm{r}(A)+2\sum_{i<j}a_{ij}}{2}\right)t_{\star}^{2}.$$

Therefore, by choosing
$$
\overline{a} := \frac{1+ \epsilon}{\frac{ \Psi(u^{1})}{\Phi(u^{1})}-\frac{\sup_{u\in\Phi^{-1}(]-\infty,\varrho[)} \Psi(u)}{\varrho}},
$$
all the assumptions of [3, Theorem 2.1] can be verified.

Hence, there exist a non-empty open interval $\Lambda\subset[0, \overline{a}]$ and a positive real $\gamma$ such that for every $\lambda \in\Lambda$, the functional $J_{\lambda}$ admits at least three distinct critical points in $X$ having $\Vert\cdot\Vert_{2}$-norm less than $\gamma$. The proof is complete.
\end{proof}

As a direct application of [4, Theorem 3.1] we give the following multiplicity property.

\begin{theorem}\label{Theorem:3.5}
Let $f_{k}: \mathbb{R}\rightarrow \mathbb{R}$ be a continuous function for every $k\in\mathbb{Z}[1, n]$. Assume that there exist positive constants $\gamma$ and $\delta$ such that

\begin{enumerate}

	\item[($\text{g}_1$)] $\delta>\left(\frac{\lambda_{1}}{\mathrm{T}\mathrm{r}(A)+2\Sigma_{i<j} a_{ij}}\right)^{1/2}\gamma$.

	\item[($\text{g}_2$)] The following inequality holds:
$$
\sum_{k=1}^{n}\max_{|\xi|\leq\gamma} F_{k}(\xi) < \eta(\gamma,\delta)\ \left(\sum_{k=1}^{n}F_{k}(\delta)\right),
$$
where
$$
\eta(\gamma,\delta)\ :=\frac{\lambda_{1}\gamma^{2}}{\lambda_{1}\gamma^{2}+(\mathrm{T}\mathrm{r}(A)+2\sum_{i<j}a_{ij})\delta^{2}}.
$$
\end{enumerate}
Further require that

\begin{enumerate}
	\item[($\text{g}_3$)] $\lim\sup_{|\xi|\rightarrow\infty}\frac{F_{k}(\xi)}{\xi^{2}}<\frac{\lambda_{1}}{2} \ \text{ for all } k\in \mathbb{Z}[1, n]$.
\end{enumerate}
Then, for each
$$
\lambda\in\Lambda_{1}:=]\lambda_{1}^{\star},\ \lambda_{2}^{\star}[,
$$
where
$$
\lambda_{1}^{\star}:=\frac{\mathrm{T}\mathrm{r}(A)+2\sum_{i<j}a_{ij}}{2(\sum_{k=1}^{n}F_{k}(\delta)-\sum_{k=1}^{n}\max_{|\xi|\leq)\prime}F_{k}(\xi))},
$$
and
$$
\lambda_{2}^{\star}:=\frac{\lambda_{1}\gamma^{2}}{2(\sum_{k=1}^{n}\max_{|\xi|\leq\gamma}F_{k}(\xi))},
$$
problem $(S_{A,\lambda}^{f})$ has at least three distinct solutions and, moreover for each $h>1$, there exists an open interval
$$
\Lambda_{2}\subset[0,\ \lambda_{3,h}^{\star}],
$$
where
$$
\lambda_{3,h}^{\star}:=\frac{\lambda_{1}h\gamma^{2}}{2(\lambda_{1}\gamma^{2}(\frac{\Sigma_{k--1}^{n}F_{k}(\delta)}{\mathrm{T}\mathrm{r}(A)+2\Sigma_{i<j^{0}ij}})-\sum_{k=1}^{n}\max_{|\xi|\leq\gamma}F_{k}(\xi))},
$$
and a positive real number $\sigma>0$ such that, for each $\lambda\in\Lambda_{2}$, problem $(S_{A,\lambda}^{f})$ has at least three solutions whose norms are less than $\sigma$.
\end{theorem}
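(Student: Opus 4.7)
The plan is to apply the abstract three critical points theorem [4, Theorem 3.1] to the pair $(\Phi, \Psi)$ from (\ref{Equation:2.3}), working in the finite-dimensional space $X = \mathbb{R}^n$; both intervals $\Lambda_1$ and $\Lambda_2$ will then emerge as specializations of the intervals produced abstractly. The regularity hypotheses of [4, Theorem 3.1] are trivially satisfied here: $\Phi, \Psi \in C^1(X,\mathbb{R})$ are sequentially weakly continuous with compact derivatives, $\Phi$ is strictly convex and coercive by the left-hand side of (\ref{Equation:2.1}), and $\Phi(0) = \Psi(0) = 0$.

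First I would use $(g_3)$ to secure the Palais--Smale condition for $J_\lambda = \Phi - \lambda \Psi$ on the whole range of parameters of interest. Picking $\beta < \lambda_1/2$ with $\limsup_{|\xi| \to \infty} F_k(\xi)/\xi^2 < \beta$ for every $k \in \mathbb{Z}[1,n]$, one has $F_k(\xi) \leq \beta \xi^2 + C_\beta$ for a suitable $C_\beta > 0$, and (\ref{Equation:2.1}) then yields
\[
J_\lambda(u) \geq \bigl(\tfrac{\lambda_1}{2} - \lambda \beta\bigr)\Vert u\Vert_2^2 - n \lambda C_\beta,
\]
which is coercive whenever $\lambda < \lambda_1/(2\beta)$. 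Since $\beta$ can be brought arbitrarily close to $\max_k \limsup_{|\xi| \to \infty} F_k(\xi)/\xi^2$, this covers every $\lambda \in \Lambda_1 \cup \Lambda_2$; the finite-dimensionality of $X$ then delivers (PS).

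Next I would feed the abstract theorem the test data $u^0 := 0_X$, $u^1 := (\delta, \delta, \ldots, \delta)^t \in X$, and the threshold $r := \lambda_1 \gamma^2 / 2$. A direct computation using $(u^1)^t A\, u^1 = \delta^2(\mathrm{Tr}(A) + 2\sum_{i<j} a_{ij})$ yields
\[
\Phi(u^1) = \tfrac{\delta^2}{2}\bigl(\mathrm{Tr}(A) + 2{\textstyle\sum_{i<j}} a_{ij}\bigr), \qquad \Psi(u^1) = \sum_{k=1}^{n} F_k(\delta),
\]
so $(g_1)$ is precisely the assertion $\Phi(u^1) > r$. For the sublevel estimate, (\ref{Equation:2.2}) forces $\Vert u\Vert_\infty \leq \sqrt{2r/\lambda_1} = \gamma$ whenever $\Phi(u) \leq r$, and hence
\[
\sup_{u \in \Phi^{-1}((-\infty, r])} \Psi(u) \leq \sum_{k=1}^{n} \max_{|\xi| \leq \gamma} F_k(\xi).
\]
Combining this estimate with $(g_2)$ and the explicit form of $\eta(\gamma, \delta)$ produces the key inequality $\sup_{\Phi(u) \leq r} \Psi(u)/r < \Psi(u^1)/\Phi(u^1)$, which is the central hypothesis of [4, Theorem 3.1].

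Invoking [4, Theorem 3.1] then furnishes, on the one hand, three distinct critical points of $J_\lambda$ for every $\lambda$ in an open interval whose endpoints, after substituting the values of $\Phi(u^1)$, $\Psi(u^1)$, $r$, and the sublevel bound above, coincide with $\lambda_1^\star$ and $\lambda_2^\star$; and, on the other hand, for each $h > 1$, the open subinterval $\Lambda_2 \subset [0, \lambda_{3, h}^\star]$ together with the uniform bound $\sigma > 0$ on the norms. The only delicate point I anticipate is purely bookkeeping: reconciling the algebraic shape of the endpoints delivered by the abstract theorem with the explicit constants $\lambda_1^\star$, $\lambda_2^\star$, $\lambda_{3, h}^\star$ in the statement. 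Once the three ingredients $\Phi(u^1)$, $\Psi(u^1)$, and $\sup_{\Phi(u) \leq r} \Psi(u)$ have been made explicit, this reduces to algebraic manipulation.
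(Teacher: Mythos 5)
Your proposal follows essentially the same route as the paper's proof: both apply [4, Theorem 3.1] with the test vector $u^{1}=(\delta,\ldots,\delta)^{t}$, the threshold $r=\lambda_{1}\gamma^{2}/2$, the sublevel estimate coming from~(\ref{Equation:2.2}), and hypothesis $(\text{g}_2)$ to obtain the key inequality $\sup_{\Phi(u)\leq r}\Psi(u)/r<\Psi(u^{1})/\Phi(u^{1})$, after which the endpoints $\lambda_{1}^{\star}$, $\lambda_{2}^{\star}$, $\lambda_{3,h}^{\star}$ are read off. The only real divergence is in the coercivity step, where your estimate correctly retains the factor $\lambda$ (the paper's displayed inequality drops it and asserts coercivity for every $\lambda>0$), though your claim that the resulting restriction $\lambda<\lambda_{1}/(2\beta)$ automatically covers all of $\Lambda_{1}\cup\Lambda_{2}$ is asserted rather than checked --- a loose end shared, in a slightly different form, by the paper itself.
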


\begin{proof}
We use the notations adopted above. Our aim is to apply [4, Theorem 3.1]. First of all let us verify that $J_{\lambda}$ is a coercive functional for every positive parameter $\lambda$. By ($\text{g}_3$) there are constants $\epsilon \in ] 0, \lambda_{1}/2[$ and $\sigma>0$ such that
\begin{equation}
\displaystyle \frac{1}{\xi^{2}}\int_{0}^{\xi}f_{k}(s)ds<\frac{\lambda_{1}}{2}-\epsilon
\end{equation}
for every $|\xi|\geq\sigma$ and $k\in \mathbb{Z}[1,n]$. Let us put
\begin{equation}
M_{1} := \max_{(k,\xi)\in \mathbb{Z}[1,n]\times[-\sigma,\sigma]} \int_{0}^{\xi}f_{k}(s)ds.
\end{equation}
At this point note that, for every $\xi\in\mathbb{R}$ and $k\in\mathbb{Z}[1, n]$, one has
$$
\int_{0}^{\xi}f_{k}(s)ds\leq M_{1}+M_{2}\xi^{2},
$$
where
$$
M_{2}:=\frac{\lambda_{1}}{2}-\epsilon.
$$
Moreover, the following inequality holds:
$$J_{\lambda}(u)\displaystyle \geq\frac{u^{t}Au}{2}-\sum_{k=1}^{n}[M_{1}+M_{2}u_{k}^{2}], \ \text{ for all } u\in X.$$

Hence,
$$J_{\lambda}(u) \geq\frac{u^{t}Au}{2}-M_{2}\Vert u\Vert_{2}^{2}-nM_{1}, \ \text{ for all } u\in X,$$

and by relation~(\ref{Equation:2.1}), one has
\begin{equation}
J_{\lambda}(u)\geq\epsilon\Vert u\Vert_{2}^{2}-nM_{1}, \ \text{ for all } u\in X,
\end{equation}
which clearly shows that
\begin{equation}\label{Equation:3.5}
\lim_{\Vert u\Vert_{2}\rightarrow\infty}  J_{\lambda}(u)=+\infty.
\end{equation}

Hence $J_{\lambda}$ is coercive for every positive parameter $\lambda>0$.

Next, consider the vector $u^{\star}\in X$ of components $u_{k}^{\star} = \delta$, for every $k\in\mathbb{Z}[1,n]$. Thus
\begin{equation}\label{Equation:3.6}
\Phi(u^{\star})=\left(\frac{\mathrm{T}\mathrm{r}(A)+2\sum_{i<j}a_{ij}}{2}\right)\delta^{2}.
\end{equation}
Put
$$
r:=\frac{\lambda_{1}}{2}\gamma^{2}.
$$

It follows now from ($\text{g}_1$) that $\Phi(u^{\star})>r$. Further, we explicitly observe that, in view of~(\ref{Equation:2.2}), one has
\begin{equation}\label{Equation:3.7}
\Phi^{-1}(]-\infty, r[)\subset\{u\in X:\Vert u\Vert_{\infty}\leq \gamma\}.
\end{equation}
Moreover, taking~(\ref{Equation:3.7}) into account, a direct computation ensures that
\begin{equation}\label{Equation:3.8}
\sup_{u \in \Phi^{-1} (]-\infty,r[)} \Psi(u)\leq\sum^{n}_{k=1}\max_{|\xi|\leq y}F_{k}(\xi).
\end{equation}

At this point, by definition of $u^{\star}$, we can clearly write
\begin{equation}\label{Equation:3.9}
\displaystyle \Psi(u^{\star})=\sum_{k=1}^{n}F_{k}(u^{\star})=\sum_{k=1}^{n}F_{k}(\delta).
\end{equation}
Moreover, by using hypothesis ($g_2$) from~(\ref{Equation:3.8}) and~(\ref{Equation:3.9}), we have
$$
\sup_{u \in \Phi^{-1} (]-\infty,r[)} \Psi(u) < \frac{r}{r+\Phi(u^{\star})} \Psi(u^{\star}),
$$

taking into account that
$$
\frac{r}{r+\Phi(u^{\star})}= \eta( \gamma, \delta).
$$
Thus, we can apply [4, Theorem 3.1], bearing in mind that
$$
\frac{\Phi(u^{\star})}{\Psi(u^{\star})- \sup_{u \in \Phi^{-1} (]-\infty,r[)} \Psi(u)} \leq \lambda_{1},
$$
and
$$
\frac{r}{\sup_{u \in \Phi^{-1} (]-\infty,r[)} \Psi(u)} \geq \lambda_{2}
$$
as well as
$$
\frac{hr}{r \frac{\Psi(u^{\star})}{\Phi(u^{\star})} - \sup_{u \in \Phi^{-1} (]-\infty,r[)} \Psi(u)} \leq \lambda_{3,h}^{\star}.
$$
The proof is complete.
\end{proof}

\begin{remark}
\normalfont
As observed in [4, Remark 2.1], the real intervals $\Lambda_{1}$ and $\Lambda_{2}$ in Theorem~\ref{Theorem:3.5} are such that either
$$
\Lambda_{1}\cap\Lambda_{2}=\emptyset,
$$
or
$$
\Lambda_{1}\cap\Lambda_{2}\neq\emptyset.
$$
In the first case, we actually obtain two distinct open intervals of positive real parameters for which problem $(S_{A,\lambda}^{f})$ admits two nontrivial solutions; otherwise, we obtain only one interval of positive real parameters, precisely $\Lambda_{1}\cup\Lambda_{2}$, for which problem $(S_{A,\lambda}^{f})$ admits three solutions and in addition, the subinterval $\Lambda_{2}$ for which the solutions are uniformly bounded.
\end{remark}

The following is a simple application of Theorem~\ref{Theorem:3.5}.

\begin{example}\label{Example:3.7}
\normalfont
Let $g_{k}: \mathbb{R}\rightarrow \mathbb{R}$ be as follows:

$$g_{k}(s):=\left\{\begin{array}{ll}
0 & \mathrm{i}\mathrm{f}\ s<2,\\
k\sqrt{s-2} & \mathrm{i}\mathrm{f}\ s\geq 2,
\end{array}\right.$$

whose potentials are given by

$$G_{k}(t):=\displaystyle \int_{0}^{t}g_{k}(s)ds=\left\{\begin{array}{ll}
0 & \mathrm{i}\mathrm{f}\ t<2,\\
\frac{2k}{3}(t-2)^{3/2} & \mathrm{i}\mathrm{f}\ t\geq 2,
\end{array}\right.$$

for every $k\in \mathbb{Z}[1, n]$. Consider the algebraic nonlinear system
\begin{center}
\hfill $Au=\lambda g(u)$, \hfill $(S_{A,\lambda}^{g})$
\end{center}
where $A \in\mathfrak{X}_{n}$ and $g(u) :=(g_{1}(u_{1}), \ldots , g_{n}(u_{n}))^{t}$.

We observe that there exist two positive constants $\gamma=2$ and
$$
\delta > 2 \max \left\{ 1, \left(\frac{\lambda_{1}}{\mathrm{T}\mathrm{r}(A)+2\sum_{i<j}a_{ij}}\right)^{1/2}\right\},
$$

such that all the conditions of Theorem~\ref{Theorem:3.5} hold. Then for each
$$
\lambda\in\Lambda_{1}':=]\lambda_{1}^{\star},\ +\infty[,
$$
where
$$
\lambda_{1}^{\star}:=\frac{\mathrm{T}\mathrm{r}(A)+2\sum_{i<j}a_{ij}}{2(\sum_{k=1}^{n}G_{k}(\delta))},
$$
problem $(S_{A,\lambda}^{g})$ has at least three distinct solutions (two nontrivial) and moreover, for each $h>1$, there exist an open interval
$$
\Lambda_{2}'\subset[0, \lambda_{3,h}^{\star}],
$$
where
$$
\lambda_{3,h}^{\star}:=h\frac{\mathrm{T}\mathrm{r}(A)+2\sum_{i<j}a_{ij}}{2(\sum_{k=1}^{n}G_{k}(\delta))}=h\lambda_{1}^{\star},
$$
and a positive real number $\sigma>0$ such that for each $\lambda\in\Lambda_{2}'$, problem $(S_{A,\lambda}^{g})$ has at least three solutions whose norms are less than $\sigma.$
\end{example}

\begin{remark}\label{Remark:3.8}
\normalfont
A vector $\overline{u}:= (\overline{u}_{1}, \ldots, \overline{u}_{n})^{t}2\mathbb{R}^{n}$ is said to be {\it positive} ({\it nonnegative}) if $\overline{u}_{k}>0$ ($\overline{u}_{k}\geq 0$) for every $k\in\mathbb{Z}[1, n]$. Now, let $A \in\mathfrak{X}_{n}$ and consider the following conditions:
\begin{enumerate}
	\item[($\text{A}_1$)] If $i\neq j$, then $a_{ij}\leq 0$.

	\item[($\text{A}_2$)] For every $i\in\mathbb{Z}[2, n]$, there exists $j_{\mathrm{i}}<i$ such that $a_{ij_{i}}<0$.
\end{enumerate}
Assume that ($\text{A}_1$) holds. Then, if $\overline{u} :=(\overline{u}_{1}, \ldots ,\overline{u}_{n})^{t}\in X$ is a solution of

\begin{center}
\hfill $\sum_{j=1}^{n}a_{ij}u_{j}\geq 0 \ \text{ for all } i\in \mathbb{Z}[1, n],$ \hfill $(S_{A}^{\star})$
\end{center}

then $\overline{u}_{i} \geq 0$, for every $i\in\mathbb{Z}[1, n]$ (see [11, 28] and [9, Proposition 2.1]). If, in addition to ($\text{A}_1$), condition ($\text{A}_2$) holds, then any solution of $(S_{A}^{\star})$ is trivial or otherwise is positive (see [9, Proposition 2.2]). Hence, if $f_{k}$ are nonnegative, for every $k\in \mathbb{Z}[1, n]$, our results guarantee the existence of two nonnegative solutions if $A$ satisfies hypothesis ($\text{A}_1$). Finally, if ($\text{A}_2$) holds together with ($\text{A}_1$), then the obtained solutions are positive.
\end{remark}

\section{Applications}\label{Section:4}
In this section we present some direct applications to discrete equations.

\subsection{Tridiagonal matrices}

Let $n>1$ and $(a, b)\in\mathbb{R}^{-}\times \mathbb{R}^{+}$ be such that
$$
\cos\left(\frac{\pi}{n+1}\right)<-\frac{b}{2a}.
$$
Set

$$\text{Trid}_n (a, b, a) =
\begin{pmatrix}
b & a & 0 & \ldots & 0\\
a & b & a & \ldots & 0\\
  &   & \ddots  &  &  \\
0 & \ldots & a & b & a\\
0 & \ldots & 0 & a & b\\
\end{pmatrix}
_{n\times n}
$$

Note that $\text{Trid}_n (a, b, a)$ is a symmetric and positive definite matrix whose first eigenvalue is given by
$$
\lambda_{1}=b+2a\cos\left(\frac{\pi}{n+1}\right),
$$
see, for instance, [20, Example 9, page 179]. In this setting an important case is given by the following matrix:

$$\text{Trid}_n (-1, 2, -1) =
\begin{pmatrix}
2 & -1 & 0 & \ldots & 0\\
-1 & 2 & -1 & \ldots & 0\\
  &   & \ddots  &  &  \\
0 & \ldots & -1 & 2 & -1\\
0 & \ldots & 0 & -1 & 2\\
\end{pmatrix}
_{n\times n}
\in \mathfrak{X}_{n},
$$

which is associated to the second-order discrete boundary value problem
~\\~
\begin{center}
\hfill
$\left\{\begin{array}{l}
-\Delta^2 u_{k-1} = \lambda f_{k}(u_{k}),\qquad \forall k\in \mathbb{Z}[1,n], \qquad \qquad \qquad \qquad\\
\qquad \quad \, u_{0}=u_{n+1}=0, \hfill (S_{\lambda}^{j}) \\
\end{array}\right.$
\end{center}
~\\~
where $\Delta^2 u_{k-1} := \Delta (\Delta u_{k-1})$, and, as usual, $\Delta u_{k-1} := u_{k}-u_{k-1}$ denotes the forward difference operator. We point out that the matrix $\text{Trid}_n (-1, 2, -1)$ was considered in order to study the existence of nontrivial solutions of nonlinear second-order difference equations [8, 14, 15, 17]. For completeness, we just men- tion here that there is a vast literature on nonlinear difference equations based on fixed point and upper and lower solution methods (see, for instance, the papers [2, 12]).

\begin{example}\label{Example:4.1}
\normalfont
By Theorem~\ref{Theorem:3.4}, there are a non-empty open interval $\Lambda\subset(0, +\infty)$ and a number $\gamma>0$ such that for every $\lambda \in\Lambda$, the following problem,

\begin{center}
\hfill $\text{Trid}_n(a, b, a)u=\lambda g(u)$, \hfill $(T_{\lambda}^{g})$
\end{center}

where $g(u):=(g_{1}(u_{1}), \ldots , g_{n}(u_{n}))^{t}$, in which

$$g_{\mathrm{i}}(u_{i}):=\left\{\begin{array}{ll}
-iu_{i}^{2} & \mathrm{i}\mathrm{f}\ u_{\mathrm{i}}\leq 0,\\
\frac{iu_i}{\log u_{i}} & \mathrm{i}\mathrm{f}\  0 < t \leq e^{i},\\
\frac{i}{e} & \mathrm{i}\mathrm{f}\  u_i > e^{i},\\
\end{array}\right.$$

has at least two distinct nontrivial solutions $u_{\lambda}^{1}, u_{\lambda}^{2}\in \mathbb{R}^{n}$, and
$$
\Vert u_{\lambda}^{i}\Vert_{2}<\gamma, \quad i\in\{1, 2\}.
$$
Note that $g$ in Example~\ref{Example:4.1} satisfies ($\text{h}_{2}'$) but not ($\text{h}_2$) for any constant $v_{0}>1$. Therefore, one can apply Theorem~\ref{Theorem:3.4} but not Theorem~\ref{Theorem:3.1}.
\end{example}

\subsection{Fourth-order difference equations}
As it is well known, boundary value problems involving fourth-order difference equations such as
~\\~
\begin{center}
\hfill
$\left\{\begin{array}{l}
\Delta^4 u_{k-2}= \lambda f_{k}(u_{k}), \quad \forall k\in \mathbb{Z}[1, n], \qquad \qquad \qquad \qquad\\
\quad \; \; \, u_{-2}=u_{-1}=u_{0}=0, \hfill (D_{\lambda}^{f})\\
\quad \; u_{n+1}=u_{n+2}=u_{n+3}=0,
\end{array}\right.$
\end{center}
~\\~
can also be expressed as the problem $(S_{A,\lambda}^{f})$ , where $A$ is the real symmetric and positive definite matrix of the form
$$A :=
\begin{pmatrix}
6 & -4 & 1 & 0 & \ldots & 0 & 0 & 0 & 0\\
-4 & 6 & -4 & 1 & \ldots & 0 & 0 & 0 & 0\\
1 & -4 & 6 & -4 & \ldots & 0 & 0 & 0 & 0\\
0 & 1 & -4 & 6 & \ldots & 0 & 0 & 0 & 0\\
  &   &    &   & \ddots &   &   &   & \\
0 & 0 & 0 & 0 & \ldots & 6 & -4 & 1 & 0\\
0 & 0 & 0 & 0 & \ldots & -4 & 6 & -4 & 1\\
0 & 0 & 0 & 0 & \ldots & 1 & -4 & 6 & -4\\
0 & 0 & 0 & 0 & \ldots & 0 & 1 & -4 & 6\\
\end{pmatrix}
_{n\times n}
\in \mathfrak{X}_{n}.
$$
Hence, Proposition~\ref{Proposition:1.1} is a direct consequence of Theorem~\ref{Theorem:3.4}.

\subsection{Partial difference equations}
A {\it lattice point} $z :=(i, j)$ in the plane is a point with integer coordinates. Two lattice points are said to be {\it neighbors} if their Euclidean distance is one. An edge is a set $\{z, z^{\star}\}$ consisting of two neighboring points, whereas a directed edge is an ordered pair $(z, z^{\star})$ of neighboring points. A {\it path} between two lattice points $z$ and $z^{\star}$ is a sequence $z=z_{0}, \ldots , z_{s} = z^{\star}$ of lattice points such that $z_{\mathrm{i}}$ and $z_{i+1}$ are neighbors for $0\leq i\leq s-1$. A set $S$ of lattice points is said to be {\it connected} if there is a path contained in $S$ between any two points of $S$. A finite and connected set of lattice points is called a {\it net}. An {\it exterior boundary} point of a net $S$ is a point outside $S$ but has a neighbor in $S$. The set of all exterior boundary points is denoted by $\partial S$. The set of all edges of $S$ is denoted by $\Gamma(S)$ and the set of all directed edges of a net $S$ by $E(S)$ . The pair $(S,\Gamma(S))$ is a planar graph and the pair $(S, E(S))$ is a planar directed graph. With the above notations we consider the problem, namely $(E_{\lambda}^{f})$, given by

$$\left\{\begin{array}{rl}
Du(z)+\lambda f(z,\ u(z))=0, & z \in S,\\
u(z)=0, & z \in \partial S,\\
\end{array}\right.$$

where

$$Du(z):=[u(i+1, j)-2u(i, j)+u(i-1, j)]+[u(i, j+1)-2u(i, j)+u(i, j-1)]$$

is the well-known discrete Laplacian acting on a function $u: S\cup \partial S\rightarrow \mathbb{R}$. Then problem $(E_{\lambda}^{f})$ can be written as a nonlinear algebraic system (see, for more details, the monograph of Cheng [10]). We also cite the paper [13] in which the existence of infinitely many solutions for problem $(E_{\lambda}^{f})$ has been investigated.

\begin{example}\label{Example:4.2} 
\normalfont
For each
$$
\lambda>\frac{1}{0.3787311542}-\sim 2.6,
$$
the following problem,
\begin{eqnarray*}
[u(i+1, j)-2u(i, j)+u(i-1, j)]+[u(i, j+1)-2u(i, j)+u(i, j-1)]\\
+\lambda a(u(i, j))=0, \forall(i, j)\in \mathbb{Z}[1, 2]\times \mathbb{Z}[1, 2], \qquad \qquad \qquad \qquad
\end{eqnarray*}
with boundary conditions
$$
u(i,0) = u(i,3) = 0, \quad \forall i\in\mathbb{Z}[1, 2],
$$
$$
u(0, j)=u(3, j)=0, \quad \forall j\in\mathbb{Z}[1, 2],
$$
where $a(s) :=\log(1+s^{2})$ for every $s>0$ and zero otherwise, admits two non- trivial (positive) solutions.

Indeed, let $h: \mathbb{Z}[1, 2]\times \mathbb{Z}[1, 2]\rightarrow \mathbb{Z}[1, 4]$ be the bijection defined by

$$h(i,\ j):=i+2(j-1), \ \text{ for every } (i, j)\in \mathbb{Z}[1, 2]\times \mathbb{Z}[1, 2].$$

Next, define
$$
w_{k}:=u(h^{-1}(k)),
$$
and
$$
g_{k}(w_{k})=g_{k}(u(h^{-1}(k))):=a(w_{k}),
$$
for every $k\in\mathbb{Z}[1, 4]$. The above problem can then be written as
$$
Bw=\lambda g(w),
$$
where
$$B :=
\begin{pmatrix}
4 & -1 & \vdots & -1 & 0\\
-1 & 4 & \vdots & 0 & -1\\
\ldots & \ldots & \ldots & \ldots & \ldots  \\
-1 & 0 & \vdots & 4 & -1\\
0 & -1 & \vdots & -1 & 4\\
\end{pmatrix}
,
$$
$w:= (w_{1},\ldots, w_{k})$ and $g(w):=(g_{1}(w_{1}), \ldots , g_{4}(w_{4}))^{t}$. Our assertion now immediately follows from Theorem~\ref{Theorem:3.1} and Remark~\ref{Remark:3.8}.
\end{example}

Some recent results about the discontinuous case were obtained in [18].
~\\~

\textbf{Acknowledgments.} This paper was written when the first author was visiting  the University of Ljubljana in  2012. He expresses his gratitude for the warm hospitality.
This research was supported  by the SRA grants P1-0292-0101 and J1-4144-0101.

\section*{Bibliography}
\setlength{\parskip}{1em}
\noindent
[1] R. P. Agarwal, {\it Difference Equations and Inequalities}: {\it Theory Methods and Applications}, Marcel Dekker, New York, 2000.

\noindent
[2] C. Bereanu and J. Mawhin, Existence and multiplicity results for nonlinear second order difference equations with Dirichlet boundary conditions, {\it Math. Bohem}. 131 (2006), 145-160.

\noindent
[3] G. Bonanno, Some remarks on a three critical points theorem, {\it Nonlinear Anal}. 54 (2003), 651-665.

\noindent
[4] G. Bonanno, A critical points theorem and nonlinear differential problems, {\it J. Global Optim}. 28 (2004), 249-258.

\noindent
[5] G. Bonanno and P. Candito, Infinitely many solutions for a class of discrete non-linear boundary value problems, {\it Appl. Anal}. 88 (2009), 605-616.

\noindent
[6] G. Bonanno and P. Candito, Nonlinear difference equations investigated via critical point methods, {\it Nonlinear Anal}. 70 (2009), 3180-3186.

\noindent
[7] G. Bonanno and S. A. Marano, On the structure of the critical set of non-differentialble functions with a weak compactness condiction, {\it Appl. Anal}. 89 (2010), 1-10.

\noindent
[8] P. Candito and G. Molica Bisci, Existence of two solutions for a nonlinear second-order discrete boundary value problem, {\it Adv. Nonlinear Stud}. 11 (2011), 443-453.

\noindent
[9] P. Candito and G. Molica Bisci, Existence of solutions for a nonlinear algebraic system with a parameter, {\it Appl. Math. Comput}. 218 (2012), 11700-11707.

\noindent
[10] S. S. Cheng, {\it Partial Difference Equations}, Taylor\& Francis, London, 2003.

\noindent
[11] M. Fiore, A proposito di alcune disequazioni lineari, {\it Rend. Semin. Mat. Univ Padova} 17 (1948), 18.

\noindent
[12] J. Henderson and H. B. Thompson, Existence of multiple solutions for second order discrete boundary value problems, {\it Comput. Math. Appl}. 43 (2002), 1239-1248.

\noindent
[13] M. Imbesi and G. Molica Bisci, Discrete elliptic Dirichlet problems and nonlinear algebraic systems, preprint.

\noindent
[14] A. Krist\'{a}ly, M. $\mathrm{M}\mathrm{i}\mathrm{h}\dot{\mathrm{a}}$ilescu and V $\mathrm{R}\dot{\mathrm{a}}$dulescu, Discrete boundary value problems involving oscillatory nonlinearities: Small and large solutions, {\it J. Difference} $Equ.$ {\it Appl}. 17 (2011), 1431-1440.

\noindent
[15] A. Krist\'{a}ly, M. $\mathrm{M}\mathrm{i}\mathrm{h}\dot{\mathrm{a}}$ilescu, V. $\mathrm{R}\dot{\mathrm{a}}$dulescu and S. Tersian, Spectral estimates for a nonhomogeneous difference problem, {\it Commun. Contemp. Math}. 12 (2010), no. 6, 1015-1029.

\noindent
[16] A. Krist\'{a}ly, V $\mathrm{R}\dot{\mathrm{a}}$dulescu and C. Varga, {\it Variational Principles in Mathematical Physics, Geometry and Economics}: {\it Qualitative Analysis of Nonlinear Equations and Unilateral Problems}, Encyclopedia of Mathematics and Its Applications 136, Cambridge University Press, Cambridge, 2010.

\noindent
[17] M. $\mathrm{M}\mathrm{i}\mathrm{h}\dot{\mathrm{a}}$ilescu, V. $\mathrm{R}\dot{\mathrm{a}}$dulescu and S. Tersian, Eigenvalue problems for anisotropic discrete boundary value problems, {\it J. Difference} $Equ$. {\it Appl}. 15 (2009), 557-567.

\noindent
[18] G. Molica Bisci and D. Repov\v{s}, Nonlinear algebraic systems with discontinuous terms, {\it J. Math. Anal}., 398:2 (2013), 846-856.

\noindent
[19] D. Motreanu and V. $\mathrm{R}\dot{\mathrm{a}}$dulescu, {\it Variational and Non-Variational Methods in NonlinearAnalysis and Boundary Value Problems}, Nonconvex Optimization and Its Applications 67, Kluwer Academic, Dordrecht, 2003.

\noindent
[20] J. T. Scheick, {\it Linear Algebra with Applications}, International Series in Pure and Applied Mathematics, McGrawHill, New York, 1997.

\noindent
[21] G. Wang and S. S. Cheng, Elementary variational approach to zero-free solutions of a non linear eigenvalue problem, {\it NonlinearAnal}. 69 (2008), 3030-3041.

\noindent
[22] Y. Yang and J. Zhang, Existence results for a nonlinear system with a parameter, {\it J. Math. Anal. Appl}. 340 (2008), no. 1, 658-668.

\noindent
[23] Y. Yang and J. Zhang, Existence and multiple solutions for a nonlinear system with a parameter, {\it NonlinearAnal}. 70 (2009), no. 7, 2542-2548.

\noindent
[24] G. Zhang, Existence of non-zero solutions for a nonlinear system with a parameter, {\it NonlinearAnal}. 66 (2007), no. 6, 1410-1416.

\noindent
[25] G. Zhang and L. Bai, Existence of solutions for a nonlinear algebraic system, {\it Discrete} $Dyn$. {\it Nat. Soc}. 2009 (2009), Article ID 785068.

\noindent
[26] G. Zhang and S. S. Cheng, Existence of solutions for a nonlinear algebraic system with a parameter, {\it J. Math. Anal. Appl}. 314 (2006), 311-319.

\noindent
[27] G. Zhang and W. Feng, On the number of positive solutions of a nonlinear algebraic system, {\it Linear Algebra Appl}. 422 (2007), 404-421.

\noindent
[28] G. Zwirner, Criteri d'unicit\`{a} per un problema di valori al contorno per equazioni $\mathrm{e}$ sistemi di equazioni differenziali ordinarie d'ordine qualunque, {\it Rend. Semin. Mat. Univ Padova} 13 (1942), 9-25.
\\~\\~\\
Received October 18, 2012; accepted November 5, 2012.
\\~\\
\noindent
\textbf{Author information}
~\\[0.6em]
\noindent
Giovanni Molica Bisci, Dipartimento MECMAT, University of Reggio Calabria, Via Graziella, Feo di Vito, 89124 Reggio Calabria, Italy.\\
\noindent
E-mail: \url{gmolica@unirc.it}
~\\[0.6em]
\noindent
Du\v{s}an Repov\v{s}, Faculty of Education, and Faculty of Mathematics and Physics, University of Ljubljana, Kardeljeva plo\v s\v cad 16, 1000 Ljubljana, Slovenia.\\
\noindent
E-mail: \url{dusan.repovs@guest.arnes.si}

\end{document}